\documentclass{amsart}

\usepackage{amsmath,amssymb,amsthm, a4wide}

\setlength\parindent{0pt}

\usepackage{graphicx,tikz, caption}

\newtheorem{theorem}{Theorem}

\newtheorem{corollary}[theorem]{Corollary}

\newtheorem{lemma}[theorem]{Lemma}

\theoremstyle{definition}

\theoremstyle{remark}

%    Blank box placeholder for figures (to avoid requiring any
%    particular graphics capabilities for printing this document).

\begin{document}

\title[]{On Zeroes of Random Polynomials\\ and Applications to Unwinding}
\keywords{Random Polynomials, Zeroes, Potential Theory, Blaschke unwinding}
\subjclass[2010]{}

\author[]{Stefan Steinerberger}
\address{Department of Mathematics, Yale University, New Haven, CT 06511, USA}
\email{stefan.steinerberger@yale.edu}

\author[]{Hau-tieng Wu}
\address{Department of Mathematics and Department of Statistical Science,
Duke University, Box 90320, Durham NC 27708, USA}
\email{hauwu@math.duke.edu}

\thanks{S.S. is supported by the NSF (DMS-1763179) and the Alfred P. Sloan Foundation. The bulk of this work was carried out while H.-T. W. was visiting the Yale REU program SUMRY and he is grateful for its hospitality.}

\begin{abstract} Let $\mu$ be a probability measure in $\mathbb{C}$ with a continuous and compactly supported density function, let $z_1, \dots, z_n$ be
independent random variables, $z_i \sim \mu$, and consider the random polynomial
$$ p_n(z) = \prod_{k=1}^{n}{(z - z_k)}.$$
{We determine the asymptotic distribution of $\left\{z \in \mathbb{C}: p_n(z) = p_n(0)\right\}$. In particular, if $\mu$ is radial around the origin, then those solutions are also distributed according to $\mu$ as $n \rightarrow \infty$. Generally, the distribution of the solutions will reproduce parts of $\mu$ and condense another part on curves.} We use these insights to study the behavior of the Blaschke unwinding series on random data.
\end{abstract}

\maketitle

\vspace{-10pt}

\section{Introduction and main results}
The purpose of this paper is to discuss an interesting phenomenon of solutions of certain random polynomial equations. In what follows, we will
assume that $\mu$ is an absolutely continuous (with respect to the Lebesgue measure) and compactly supported probability measure on $\mathbb{C}$ and that $p_n$ denotes the random polynomial
$$ p_n(z) = \prod_{k=1}^{n}{(z - z_k)},$$
where the $z_k$ are drawn independently from $\mu$ and $n \in \mathbb{N}$. Our first result is a reproducing property for radial measures $\mu$ when $n\to\infty$ (see \S 2 for the motivation that led us to this result).

\begin{figure}[h!]
\begin{minipage}[l]{.49\textwidth}
\centering
\includegraphics[width = 4.8cm]{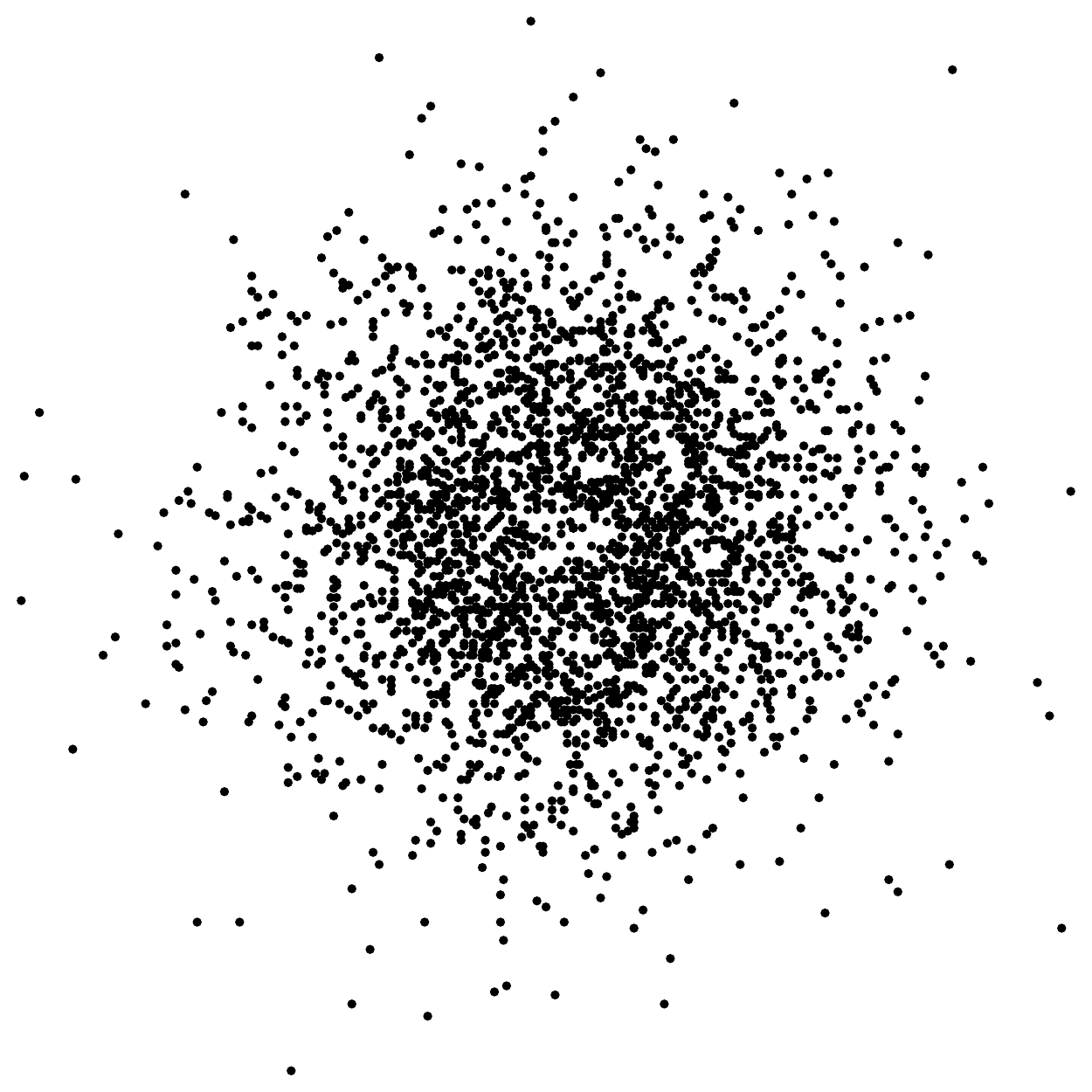} 
\end{minipage} 
\begin{minipage}[r]{.49\textwidth}
\includegraphics[width = 4.8cm]{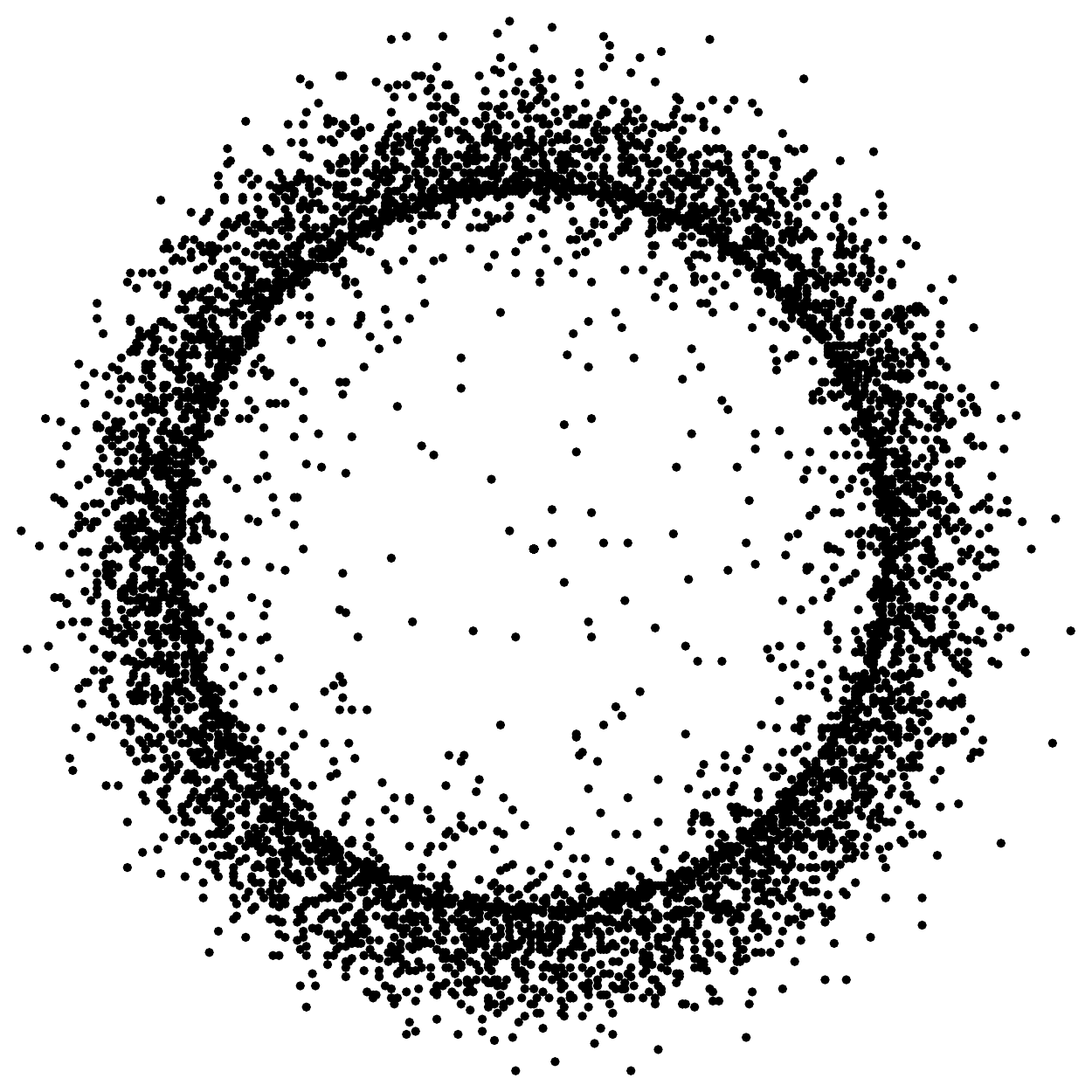} 
\end{minipage} 
\caption{Left: roots of 100 polynomials $p_{30}(z) - p_{30}(0)$ with Gaussian distributed roots are again Gaussian. Right: roots of 100 polynomials $p_{20}(z) - p_{20}(0)$ with roots uniformly distributed on the boundary of the unit disk.}
\end{figure}

\begin{theorem} Let $\mu$ be a compactly supported probability measure on $\mathbb{C}$ with a continuous, radial density function.
 Then the complex numbers $w_1, \dots, w_n$ solving $p_n(w_k) = p_n(0)$ satisfy
$$ \frac{1}{n} \sum_{k=1}^{n}{ \delta_{w_k}} \rightarrow \mu \qquad \mbox{in the sense of distributions as}~n\rightarrow \infty.$$
\end{theorem}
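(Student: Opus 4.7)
My approach is to reformulate the problem via logarithmic potential theory. Set $q_n(z):=p_n(z)-p_n(0)$, a monic polynomial of degree $n$, and let $\nu_n:=n^{-1}\sum_{k=1}^{n}\delta_{w_k}$ denote the empirical measure of its zeros. The logarithmic potential of $\nu_n$ is
$$ U^{\nu_n}(z)=\int \log|z-w|\,d\nu_n(w)=\tfrac{1}{n}\log|q_n(z)|. $$
Taking distributional Laplacians sends $U^{\nu_n}\mapsto 2\pi\nu_n$ and $U^{\mu}(z):=\int\log|z-w|\,d\mu(w)\mapsto 2\pi\mu$, so it suffices to prove that $U^{\nu_n}\to U^{\mu}$ in $L^1_{\mathrm{loc}}(\mathbb{C})$ almost surely.

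Because $\mu$ has a continuous compactly supported density, $w\mapsto\log|z-w|$ is $\mu$-integrable for every $z$, so $n^{-1}\log|p_n(z)|=n^{-1}\sum_k\log|z-z_k|$ is a genuine Monte Carlo average. Kolmogorov's strong law thus gives, for every fixed $z$,
$$ \tfrac{1}{n}\log|p_n(z)|\;\longrightarrow\;U^{\mu}(z)\quad\text{almost surely},\qquad \text{and in particular}\quad \tfrac{1}{n}\log|p_n(0)|\longrightarrow c:=U^{\mu}(0). $$

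The radial hypothesis now enters through the identity $\tfrac{1}{2\pi}\int_0^{2\pi}\log|z-re^{i\theta}|\,d\theta=\log\max(|z|,r)$; writing the density as $f(|z|)$ and integrating in polar coordinates gives
$$ U^{\mu}(z)-c\;=\;\int_0^{|z|}\log\tfrac{|z|}{r}\cdot 2\pi r f(r)\,dr\;\ge\;0, $$
with strict inequality outside the disk $\{|z|\le r_0\}$, where $r_0:=\inf\{r>0:f(r)>0\}$; crucially, this exceptional disk carries no $\mu$-mass. On its complement the quantity $|p_n(z)|/|p_n(0)|$ grows exponentially in $n$, so the trivial estimate $\bigl||p_n(z)|-|p_n(0)|\bigr|\le|q_n(z)|\le|p_n(z)|+|p_n(0)|$ forces $n^{-1}\log|q_n(z)|\to U^{\mu}(z)$ almost surely at every fixed $z$ with $|z|>r_0$.

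What remains is to promote this pointwise statement to $L^1_{\mathrm{loc}}$. By Fubini, one obtains a single full-probability event on which $n^{-1}\log|q_n(z)|\to U^{\mu}(z)$ for Lebesgue-a.e.\ $z$. On that event, the functions $n^{-1}\log|q_n|$ form a sequence of subharmonic functions whose Riesz measures have total mass one, and because the roots $z_k$ lie in a fixed compact set the sequence is uniformly bounded above on compacts. The classical compactness principle for subharmonic functions (a.e.\ convergence plus local uniform upper boundedness implies $L^1_{\mathrm{loc}}$ convergence) then delivers the desired convergence of potentials, and applying $\Delta$ yields $\nu_n\to\mu$ weakly. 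I expect the main subtlety to lie exactly in this final upgrade, namely in interchanging the ``almost surely'' quantifier over the sample space with the ``almost everywhere'' quantifier in $z$, and in verifying that the exponential gap $U^{\mu}(z)-c>0$ is robust enough near the threshold $|z|=r_0$ that the cancellation in $|p_n(z)-p_n(0)|$ does not spoil the a.s.\ convergence.
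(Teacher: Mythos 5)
Your approach is correct in spirit but genuinely different from the paper's. The paper proves Theorem~1 as a corollary of Theorem~2, whose proof proceeds root-by-root: two concentration lemmata control $n^{-1}\log|p_n(z)|$ uniformly (upper bound exponential-in-$n$, lower bound polynomial-in-$n$ away from roots), after which Rouch\'e's theorem shows that each isolated root of $p_n$ inside $A$ produces an exponentially nearby solution of $p_n(z)=p_n(0)$. That argument is more quantitative and is reused for Theorems~2 and~3, which require the finer localization. Your route --- SLLN for the pointwise potentials, an exponential-gap argument to kill the cancellation in $p_n(z)-p_n(0)$, then Fubini plus the subharmonic compactness principle and the Laplacian --- is the standard potential-theoretic machinery (in the spirit of Kabluchko's proof of the Pemantle--Rivin conjecture) and is arguably cleaner if one only wants Theorem~1.

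One step is stated slightly too strongly and needs patching. When $r_0:=\inf\{r>0:f(r)>0\}$ is strictly positive, the set $\{|z|\le r_0\}$ has positive Lebesgue measure, and your exponential-gap argument gives a.s.\ convergence of $n^{-1}\log|q_n(z)|$ only at fixed $z$ with $|z|>r_0$. Consequently the Fubini step yields a full-probability event on which $n^{-1}\log|q_n(z)|\to U^{\mu}(z)$ for Lebesgue-a.e.\ $z$ \emph{in} $\{|z|>r_0\}$, not a.e.\ in $\mathbb{C}$. This is enough, but you must say so: the roots of $q_n$ lie in a fixed compact set (since $|p_n(z)|>|p_n(0)|$ for $|z|>2R$ with $R$ the radius of $\operatorname{supp}\mu$), so $(\nu_n)$ is tight, and the compactness principle plus your a.e.\ identification on the open set $\{|z|>r_0\}$ identifies any weak subsequential limit $\nu$ with $\mu$ on that open set; since $\mu(\{|z|>r_0\})=1$ and $\nu$ is a probability measure, one concludes $\nu=\mu$. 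Also, a small slip: the uniform upper boundedness of $n^{-1}\log|q_n|$ on compacts should be justified by the roots $w_k$ of $q_n$ lying in a fixed compact set (as above), not the $z_k$ --- you wrote $z_k$.
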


Theorem 1 fails for general measures but it is not
difficult to construct non-radial measures  $\mu$ that have the same property (see Theorem 2). The assumption on $\mu$ being compactly supported is clearly
not sharp, our proof immediately transfers to probability measures having a certain rate of decay at infinity. 
The result is similar in spirit to a recent result of Kabluchko \cite{kab} (proving a conjecture of Pemantle \& Rivin \cite{pe}) who showed that the distribution 
of critical points $\left\{z \in \mathbb{C}: p_n'(z) = 0\right\}$ reproduces $\mu$ for general probability measures $\mu$.
If $\mu$ is not radial, the situation is not quite as simple. We introduce two sets $A,B \subset \mathbb{C}$ (and we will keep using $A,B$ to refer to those sets
throughout the rest of the paper)
$$ A = \left\{ z \in \mathbb{C}: \int_{\mathbb{C}}{ \log{|x-z|} d\mu(x)}  >  \int_{\mathbb{C}}{ \log{|x|} d\mu(x)} \right\}$$
and
$$ B = \left\{ z \in \mathbb{C}: \int_{\mathbb{C}}{ \log{|x-z|} d\mu(x)}  =  \int_{\mathbb{C}}{ \log{|x|} d\mu(x)} \right\}.$$
A simple description of the result for the general case can be stated as follows.
\begin{theorem}[Main Result] \label{Theorem main2}
Let $\mu$ be a probability measure on $\mathbb{C}$ with a continuous and compactly supported density function.
 Then the distribution of $\left\{z \in \mathbb{C}: p_n(z) = p_n(0)\right\}$ converges to $\nu$ in distribution, where $\nu = \mu$ on $A$ and $\nu$ has measure
$1 - \mu(A)$ supported on $B$, as $n \rightarrow \infty$.
\end{theorem}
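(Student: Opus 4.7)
The plan is to analyze the empirical root measure $\nu_n = \frac{1}{n}\sum_{k=1}^n \delta_{w_k}$ of the monic polynomial $q_n(w) := p_n(w) - p_n(0)$ via its logarithmic potential, for which the basic identity is
$$U_{\nu_n}(z) := \int \log|z-x|\, d\nu_n(x) = \frac{1}{n}\log|q_n(z)|.$$
Since $\mu$ is supported in some disk $\{|x|\le R\}$, the estimate $|p_n(w)| \ge (|w|-R)^n > R^n \ge |p_n(0)|$ for $|w|>2R$ confines all roots of $q_n$ to $\{|w|\le 2R\}$. Hence the family $(\nu_n)$ is tight and the potentials $U_{\nu_n}$ are uniformly bounded above on every compact set.

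Next, the pointwise behaviour of $U_{\nu_n}$ follows from the law of large numbers: for any fixed $z\in\mathbb{C}$,
$$\frac{1}{n}\log|p_n(z)| \longrightarrow U_\mu(z) := \int\log|z-x|\,d\mu(x), \qquad \frac{1}{n}\log|p_n(0)| \longrightarrow c_0 := \int\log|x|\,d\mu(x)$$
almost surely, where the required integrability follows from continuity and compact support of $\mu$. For $z\in A$ the first term dominates exponentially, so $|q_n(z)| = |p_n(z)|(1+o(1))$ and $U_{\nu_n}(z)\to U_\mu(z)$; for $z$ in the open set $\mathbb{C}\setminus(A\cup B)$ the second term dominates and $U_{\nu_n}(z)\to c_0$. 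In summary, $U_{\nu_n}(z)\to h(z):=\max(U_\mu(z),c_0)$ almost surely off the (Lebesgue-null) critical set $B$.

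To conclude, I would extract from $(\nu_n)$ any weakly convergent subsequence $\nu_{n_k}\to\nu^{*}$; by standard potential theory for measures sharing a common compact support, $U_{\nu_{n_k}} \to U_{\nu^{*}}$ in $L^1_{\mathrm{loc}}$, which together with the pointwise limit forces $U_{\nu^{*}} = h$ almost everywhere. Taking distributional Laplacians then gives $\nu^{*} = \frac{1}{2\pi}\Delta h$, a value independent of the subsequence, so $\nu_n \to \nu := \frac{1}{2\pi}\Delta h$ weakly. On the open set $A$ one has $h=U_\mu$ and hence $\nu|_A = \mu|_A$ with total mass $\mu(A)$; on the open set $\mathbb{C}\setminus(A\cup B)$ the constant $h\equiv c_0$ is harmonic, so $\nu$ places no mass there; consequently the remaining mass $1-\mu(A)$ is supported on $B$, matching the statement of Theorem~\ref{Theorem main2}.

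The main obstacle is the last step: upgrading the pointwise almost-sure convergence of $U_{\nu_n}$ off $B$ to a bona fide distributional limit whose Laplacian can be computed. The elementary dominant-term argument breaks down precisely on $B$, where $|p_n(z)|$ and $|p_n(0)|$ have the same exponential order and delicate cancellation may occur; the mass of $\nu$ on $B$ is detected only through the rigidity of potential theory rather than by direct analysis. Ensuring that all weak subsequential limits of $(\nu_n)$ produce the same potential (and hence the same measure) will be the technical heart of the argument.
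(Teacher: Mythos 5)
Your proposal takes a genuinely different route from the paper, and it is essentially sound. The paper's argument is \emph{local}: it establishes two concentration lemmata for $\tfrac{1}{n}\log|p_n(z)|$ (exponential upper-tail control uniformly in $z$, and polynomial lower-tail control conditioned on having no root within $c_2 n^{-1/2}$ of $z$), then fixes a root $y\in A$, factors $p_n(z)=(z-y)q_n(z)$, shows $|q_n|\gtrsim (1+\eta)^n|p_n(0)|$ near $y$, and applies Rouch\'{e}'s theorem on an exponentially small circle around $y$ to exhibit a solution of $p_n(z)=p_n(0)$ exponentially close to $y$. The mass on $B$ is then obtained by subtraction, once one rules out roots in $\mathbb{C}\setminus(A\cup B)$. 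Your argument is \emph{global}: you work with the logarithmic potential of the empirical root measure of $q_n=p_n-p_n(0)$, establish tightness via the elementary confinement $|w|\le 2R$, pass to a weak subsequential limit $\nu^{*}$, identify $U_{\nu^{*}}=h=\max(U_\mu,c_0)$ almost everywhere by combining $L^1_{\mathrm{loc}}$ convergence of potentials with pointwise a.s.\ convergence off $B$, and recover $\nu^{*}=\tfrac{1}{2\pi}\Delta h$ by taking distributional Laplacians. This is the standard potential-theoretic mechanism in the random-polynomial/random-matrix literature, and it has the advantage of not requiring Rouch\'{e} or root-separation estimates; the paper's local argument, in exchange, gives more refined information (e.g.\ that a $1-o(1)$ fraction of solutions are \emph{exponentially} close to the original roots in $A$, which is used for Corollary~4).

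One genuine gap you should be aware of: you assert that $B$ is Lebesgue-null, but this fails in the very case that the paper treats explicitly in the proof of Theorem~1. If the density vanishes on a disk $B(0,R)$ around the origin (and $\mu$ is radial, say), then $U_\mu\equiv c_0$ on $B(0,R)$ and $B=B(0,R)$ has positive area. In that situation the identification $U_{\nu^{*}}=h$ a.e.\ on $\mathbb{C}$ does not follow from your pointwise analysis, which only covers $\mathbb{C}\setminus B$: on $B^{\circ}$ the quantities $\tfrac{1}{n}\log|p_n(z)|$ and $\tfrac{1}{n}\log|p_n(0)|$ have the same exponential order, so the cancellation in $p_n(z)-p_n(0)$ is not controlled by the law-of-large-numbers argument, and the distributional Laplacian of $U_{\nu^{*}}$ is not determined on a set of positive measure. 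A clean patch is to show separately that $U_{\nu^{*}}\equiv c_0$ on $B^{\circ}$ (for instance via the subharmonicity of $U_{\nu^{*}}$, the upper bound $U_{\nu^{*}}\le c_0$ there, and the fact that $U_{\nu^{*}}=c_0$ a.e.\ on $\partial B^{\circ}$, combined with the minimum principle for the superharmonic function $c_0-U_{\nu^{*}}$), or alternatively to observe that on $B^{\circ}$ the polynomial $p_n$ has no zeros (with overwhelming probability), so $p_n$ is a non-vanishing holomorphic function there and one can bound the zero count of $p_n-p_n(0)$ directly. With that addition, the argument is complete and gives an alternative, potential-theoretic proof of Theorem~\ref{Theorem main2}.
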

We illustrate the Theorem with a specific example (the measure is a bit more singular than what is covered by the result but it is not difficult to
see that the proof carries over to this particular case). We choose $\mu$ to be the union of the arclength measure of the boundary of two disks of radius 1 in the complex plane (one located in the
origin and one centered around 2)
$$ \mu = \frac{1}{4\pi} \left( \mathcal{H}^1\big|_{|z| = 1} \cup  \mathcal{H}^1\big|_{|z-2| = 1} \right),$$
where $\mathcal{H}^1$ is the one-dimensional Hausdorff measure.
Theorem 2 implies that the random solutions of $p_n(z) =p_n(0)$ will partially follow the original measure $\mu$ and partially concentrate along four new curves.
Details behind this example are given after the proof.

\begin{figure}[h!]
\begin{minipage}[l]{.49\textwidth}
\centering
\begin{tikzpicture}[scale=1.3]
\draw[dashed] (0,0) circle (1cm);
\draw[dashed] (2,0) circle (1cm);
   \draw [ultra thick,domain=75.5225:360-75.5225] plot ({cos(\x)}, {sin(\x)});
   \draw [ultra thick,domain=360-75.5225:360] plot ({2+cos(\x)}, {sin(\x)});
   \draw [ultra thick,domain=0:104.478] plot ({2+cos(\x)}, {sin(\x)});
   \draw [ultra thick,domain=360-104.478:360] plot ({2+cos(\x)}, {sin(\x)});
   \draw [ultra thick,domain=331:360] plot ({2*cos(\x)}, {2*sin(\x)});
   \draw [ultra thick,domain=0:29] plot ({2*cos(\x)}, {2*sin(\x)});
   \draw [ultra thick,domain=151:209] plot ({2+2*cos(\x)}, {2*sin(\x)});
\draw [ultra thick] (0.25, 0.96) to[out=10, in =180] (1,1) to[out=0, in=170] (1.75, 0.96);
\draw [ultra thick] (0.25, -0.96) to[out=350, in =180] (1,-1) to[out=0, in=190] (1.75, -0.96);
\end{tikzpicture}
\end{minipage} 
\begin{minipage}[r]{.49\textwidth}
\includegraphics[width = 7cm]{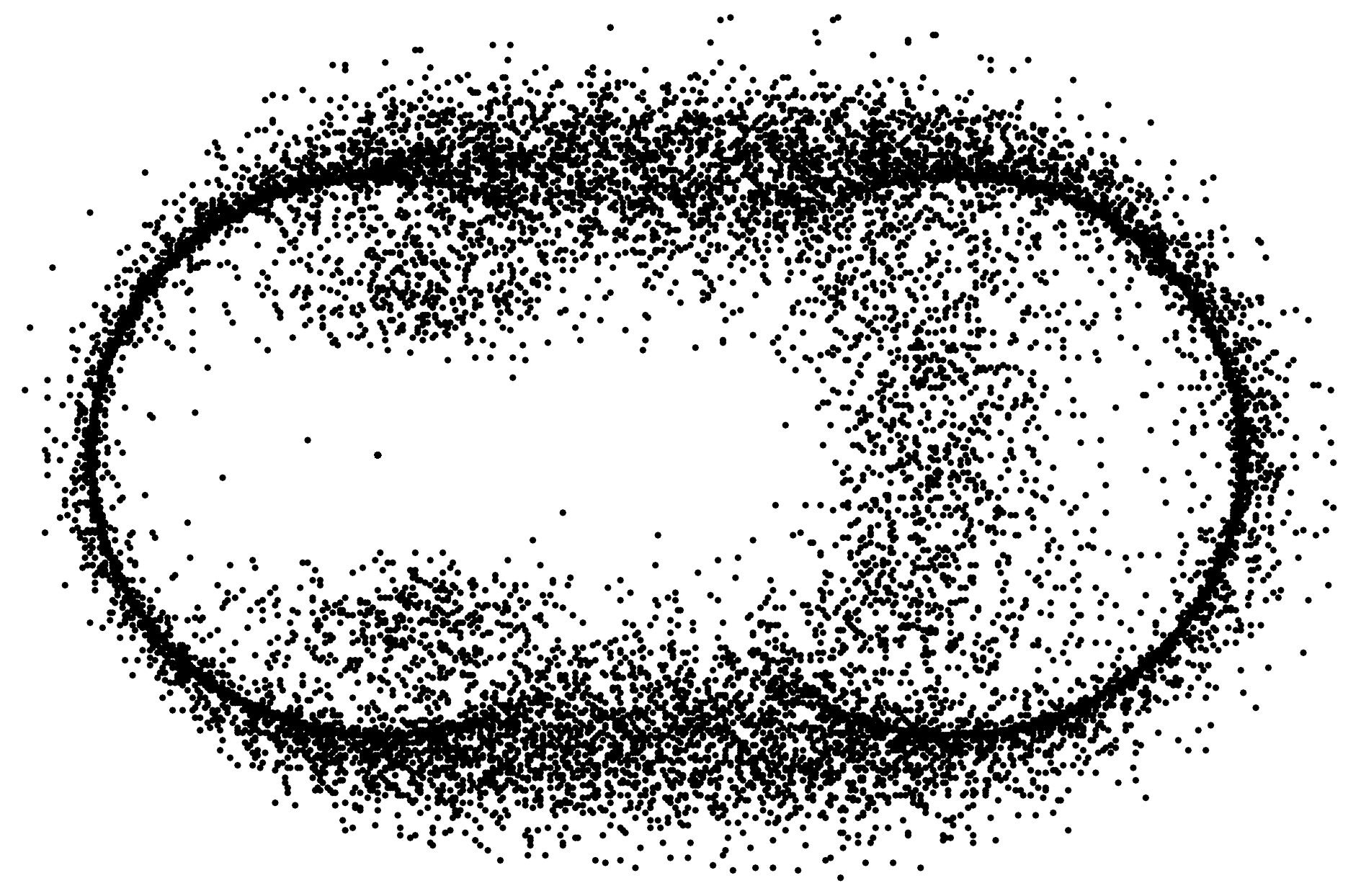} 
\end{minipage} 
\caption{Left: the support of limiting measure of solutions of $p_n(z) = p_n(0)$ as $n \rightarrow \infty$ (bold, the two circles from which roots are drawn are dashed), right: numerical example for $n=30$ with 15 roots chosen randomly from each circle. A repulsion phenomenon leads to a slight visual discrepancy in the left arc (see Theorem 3 for a more precise description).}
\end{figure}

We observe that Theorem 2 does not make any claim about how the solutions of $p_n(z) = p_n(0)$ are distributed on $B$, it only states that their total mass is going to be $1-\mu(A)$. Figure 2
seems to indicate that there might indeed be parts of $B$ that will not support any part of the new measure $\nu$, however, this is misleading: there is always,
by construction, a root in the origin and a root repulsion phenomenon. This has the effect of creating a bubble around 0 in which
no roots are found; that bubble shrinks in size as the degree $n$ increases.\\

In the generic case, we can give a more precise description of the measure $\nu$ on $B$. 
Our assumptions will be that
 $\mu$ is compactly supported and $C \subset B$ is a connected subset of $B$ that is bounded away from the support of $\mu$ and 
 satisfies that 
$$ \left\| \nabla \int_{\mathbb{C}}{ \log{|x-z|} d\mu(x)} \right\| \qquad \mbox{ is uniformly bounded away from 0 on $C$. }$$
We note that compactness of the support of $\mu$ implies compactness of $B$ and thus $C$ is necessarily bounded.
We recall that $B$ is defined as a level set of the logarithmic integral, the first condition thus implies that this level set is non-degenerate and thus $C$ is necessarily a curve by the implicit function theorem.
%This assumption combined with the definition of $B$ and  immediately implies that $C$ is a curve. 
Let $\gamma$ be an arclength parametrization of the curve $C$. 
Since the support of $\mu$ is compact and $C$ is bounded away from the support of $\mu$, $\int_{\mathbb{C}} \arg(\gamma(t) - z)d\mu(z)$ is well defined as a continuous single-valued function of $t$.
We further impose the assumption that 
$$\left| \frac{\partial}{\partial t} \int_{\mathbb{C}}{\arg(\gamma(t) - z) d\mu(z)} \right| \qquad \mbox{is uniformly bounded away from 0 on $C$. }$$
Under these assumptions, we can determine the limit structure of the measure $\nu$ on the set $B$ (which we think of as a collection of curves, a level set of the logarithmic integral).

\begin{theorem}[Structure of $\nu$ on $B$] Under these assumptions, let $\gamma(t)$ be an arclength parametrization of $C$. The limiting measure of $\left\{ z \in \mathbb{C}: p_n(z) = p_n(0)\right\}$ is absolutely continuous on $C$ and given by
\begin{equation}\label{Theorem:eq1} 
\nu \big|_{C} = \frac{1}{2\pi} \left| \frac{\partial}{\partial t} \int_{\mathbb{C}}{\arg(\gamma(t) - z) d\mu(z)}\right|^{-1} d\mathcal{H}^1.
\end{equation}
Moreover, for degree $n$ sufficiently large, then with high probability the spacing between roots on $B$ becomes uniform in the sense that two consecutive roots on $C$ have distance $\sim n^{-1}$ (up to constants depending on the density) from each other with the implicit constant determined by the limiting density.
\end{theorem}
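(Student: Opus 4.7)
The plan is to reduce the complex equation $p_n(z) = p_n(0)$ to two real equations via the logarithm and then apply a quantitative law of large numbers. Fix a tubular neighborhood $N(C)$ of $C$ that is disjoint from $\operatorname{supp}(\mu)$; on $N(C)$ one may choose branches of $\log\bigl((z - z_k)/(-z_k)\bigr)$ so that $p_n(z) = p_n(0)$ is equivalent to the system
$$R_n(z) := \frac{1}{n}\sum_{k=1}^n \log\left|\frac{z-z_k}{-z_k}\right| = 0, \qquad \Theta_n(z) := \sum_{k=1}^n \arg\left(\frac{z-z_k}{-z_k}\right) \in 2\pi \mathbb{Z}.$$
A quantitative law of large numbers (the summands have bounded variance uniformly on $N(C) \times \operatorname{supp}(\mu)$) gives, with high probability and uniformly on $N(C)$, the estimates $R_n(z) = U_\mu(z) - U_\mu(0) + O(n^{-1/2+\varepsilon})$ and $\Theta_n(z) = n\vartheta(z) + O(n^{1/2+\varepsilon})$, along with analogous bounds for their first derivatives; here $U_\mu(z) = \int \log|z-x|\,d\mu(x)$ is the logarithmic potential and $\vartheta(z) = \int \arg(z - x)\,d\mu(x) + \mathrm{const}$ is the mean argument.

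The first hypothesis, $|\nabla U_\mu|$ bounded below on $C$, together with the implicit function theorem, then produces with high probability a smooth zero-curve $C_n = \{R_n = 0\} \cap N(C)$ at Hausdorff distance $O(n^{-1/2+\varepsilon})$ from $C$; every root of $p_n(z) = p_n(0)$ inside $N(C)$ lies on $C_n$, reducing the problem to counting solutions on a single curve. Parametrizing $C_n$ by arclength as $\gamma_n(t)$ and setting $\Xi_n(t) = \Theta_n(\gamma_n(t))$, the second hypothesis and the derivative estimate give $\Xi_n'(t) = n\vartheta'(t)(1 + o(1))$, which is bounded away from zero; hence $\Xi_n$ is strictly monotone on each arc of $C_n$ with high probability. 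The number of crossings of $\Xi_n = 2\pi m$ over an arclength window $[t_1,t_2]$ is therefore $\frac{1}{2\pi}(\Xi_n(t_2) - \Xi_n(t_1)) + O(1) = \frac{n}{2\pi}(\vartheta(t_2)-\vartheta(t_1)) + O(n^{1/2+\varepsilon})$; consecutive crossings are separated by $\approx 2\pi/\Xi_n'(t) \sim n^{-1}$, which is the equispacing claim; and dividing the counting measure by $n$ and transferring from $C_n$ to $C$ yields an absolutely continuous limit of the form asserted in \eqref{Theorem:eq1}.

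The main technical obstacle is the uniform LLN for $\Theta_n$ and its derivatives, since $|\partial_z \arg(z - z_k)| = 1/|z - z_k|$ becomes large when a sample $z_k$ lies anomalously close to $N(C)$. The assumption that $C$ is bounded away from $\operatorname{supp}(\mu)$ keeps this quantity bounded for a sufficiently thin $N(C)$, but upgrading pointwise control to uniform control in $z$ (and for the $z$-gradient) requires moment estimates and a chaining or Bernstein-type argument; this is the dominant technical step. A secondary delicate point is ensuring that the integer $m$ in $\Theta_n \in 2\pi\mathbb{Z}$ indexes solutions consistently as $z$ traverses $C_n$, which needs a careful choice of branch of the logarithm so no solution is counted twice nor missed. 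Combined with Theorem~\ref{Theorem main2}, which pins down the total mass of $\nu$ on $B$, this chain of estimates delivers both the density \eqref{Theorem:eq1} and the uniform $\sim n^{-1}$ spacing of consecutive roots.
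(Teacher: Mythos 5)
Your approach is essentially the paper's: split $p_n(z)=p_n(0)$ into a modulus equation, whose solution set is a random level curve $C_n$ converging (together with derivatives) to $C$, and an argument equation tracked along $C_n$ at rate $n\vartheta'(t)(1+o(1))$, then count crossings of $2\pi\mathbb{Z}$ to read off the density and the $\sim n^{-1}$ spacing. You are more explicit than the paper about the uniform law of large numbers for $\Theta_n$ and its gradient (the paper simply invokes the deviation principle from Theorem~\ref{Theorem main2}), and that is indeed the step that deserves the most care.

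There is, however, a discrepancy in the final line that you should not gloss over. Your own count,
\begin{equation*}
N([t_1,t_2]) = \frac{n}{2\pi}\,\big|\vartheta(t_2)-\vartheta(t_1)\big| + O\big(n^{1/2+\varepsilon}\big),
\end{equation*}
divided by $n$, produces the density $\frac{1}{2\pi}\left|\partial_t\int_{\mathbb{C}}\arg(\gamma(t)-z)\,d\mu(z)\right|\,d\mathcal{H}^1$, i.e.\ \emph{proportional} to $|\vartheta'|$, whereas \eqref{Theorem:eq1} carries the exponent $-1$. Your version is the consistent one. Take $\mu=\delta_w$ with $|w|=r$, so $p_n(z)=(z-w)^n$ and the $n$ solutions of $p_n(z)=p_n(0)$ are equispaced on $C=\{|z-w|=r\}$, a circle of length $2\pi r$; the normalized density is $\frac{1}{2\pi r}$. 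Under arclength parametrization $\gamma(t)=w+re^{it/r}$ one has $\vartheta'(t)=1/r$, so $\frac{|\vartheta'|}{2\pi}=\frac{1}{2\pi r}$ agrees, while $\frac{1}{2\pi|\vartheta'|}=\frac{r}{2\pi}$ does not (and would not even give a total mass independent of $r$). So the closing sentence of your proposal, asserting that the computation ``yields an absolutely continuous limit of the form asserted in \eqref{Theorem:eq1},'' is incorrect as written: your argument in fact establishes the reciprocal of \eqref{Theorem:eq1}, and it is the displayed formula in the theorem statement that has the exponent wrong. The equispacing claim $\sim n^{-1}$ and the rest of your reasoning are unaffected.
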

It is certainly possible to slightly extend the result to cover other cases as well. For example, here we are not necessarily assuming that $\mu$ is absolutely continuous as long as it is compactly supported and the assumptions hold.
However, it certainly already describes the generic situation fairly accurately: in particular, it allows
us to deduce that the behavior on $B$ is actually quite regular: the roots decompose into evenly spaced points (with spacing roughly $\sim n^{-1}$ and an implicit constant depending on everything).

\begin{figure}[h!]
\begin{minipage}[l]{.49\textwidth}
\includegraphics[width = 7cm]{pic3.pdf} 
\end{minipage} 
\begin{minipage}[r]{.49\textwidth}
\begin{tikzpicture}
\node at (0,0) {\includegraphics[width = 6cm]{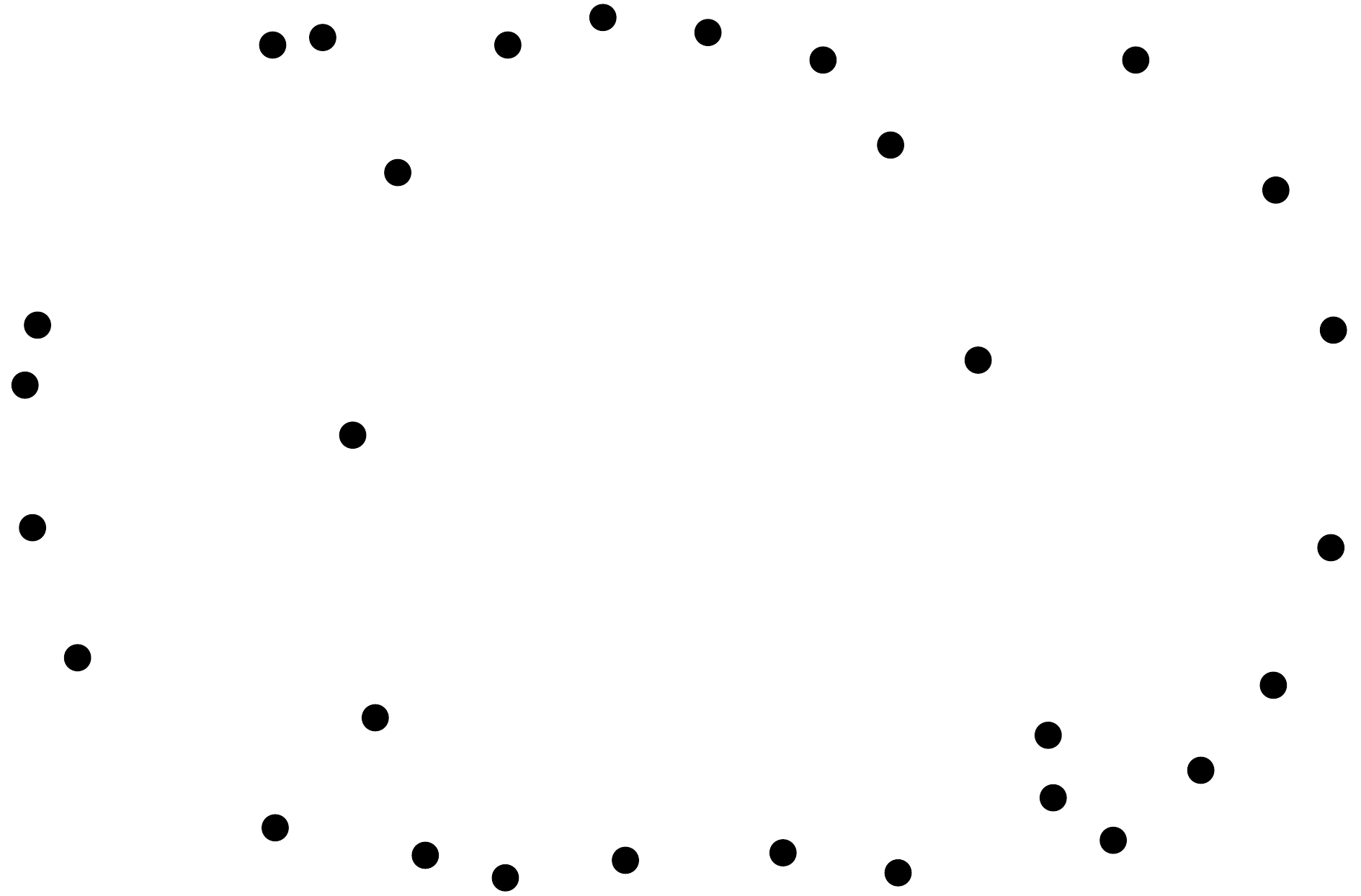}
};
\node at (-1,0) {$0$};
\draw (-1.44, 0.07) circle (0.2cm);
\end{tikzpicture}
\end{minipage} 
\caption{(Left:) the typical distribution of $p_n(z) = p_n(0)$ for $n=30$. (Right:) a single instance. Even for this rather small degree, the roots are already well-separated. The fixed root in $0$ clears out a uniform area of repulsion.}
\end{figure}

We emphasize that, in Figure 3, nothing is special about the root $z=0$. However, combining many different numerical examples has the effect of visually removing the repulsion phenomenon for all roots except the fixed one $z=0$ which is common to all numerical samples. 
We consider a simple toy example where $\mu = \delta_1$ is the deterministic measure in 1. Clearly, $p_{2n}(z) = (z-1)^{2n}$ and $p_{2n}(0) = 1$ implying that solutions of $p_{2n}(z) = p_{2n}(0)$ are given by $(z-1)^{2n} = 1$ which are equally spaced points on $|z-1|=1$. In the framework of Theorem 3, we see that
$$ \left\| \nabla \int_{\mathbb{C}}{ \log{|x-z|} d\mu(z)} \right\| = 1 \qquad \mbox{on}~|x-1|=1$$
as well as
$$ \left| \frac{\partial}{\partial t} \int_{\mathbb{C}}{\arg(\gamma(t) - z) d\mu(z)} \right| = 1$$
and the limiting measure is clearly $(2\pi)^{-1} \mathcal{H}^1$ coinciding with what is predicted by Theorem 3. 
Generically, one expects the set $B$ to be a union of bounded lines (though it can be a disk, see the proof of Theorem 1). It might be interesting to understand what happens if the measure $\mu$ decays at infinity at a certain rate. However, even for compactly supported measures, there are many fascinating open questions: whenever two of these lines meet at an angle, then clearly the gradient of the logarithmic integral vanishes and Theorem 3 does not apply: is it possible to describe the behavior of solutions of $p_n(z) = p_n(0)$ in these singular points? Moreover, one would assume that under some assumptions on the shape of $\mu$ that $B$ cannot be comprised of lines of arbitrary length. How long are these lines? How complicated can their topology be? This is related to classical questions in potential theory dating back to Maxwell (see, for example, Gabrielov, Novikov \& Shapiro \cite{gab}).

\section{Application to the Unwinding Series}
\textbf{Unwinding.}  The above results were originally motivated by a study of a nonlinear analogue of Fourier series: given a holomorphic function $f:\mathbb{C} \rightarrow \mathbb{C}$, its Blaschke factorization is given by
$$ f(z) = \left(\prod_{|\alpha| \leq 1, f(\alpha) = 0}{ \frac{z- \alpha}{1 - \overline{\alpha} z}} \right) g(z),$$
where the Blaschke product ranges over all roots inside the unit disk and $g:\mathbb{C} \rightarrow \mathbb{C}$ is holomorphic and has no roots inside the unit disk. Writing $g(z) = g(0) + (g(z) - g(0))$
produces a new holomorphic function, $g(z) - g(0)$, which has at least one root inside the unit disk. Iterating the process yields a formal expansion
$$ f(z) = a_0 B_0 + a_1 B_0 B_1 + a_2 B_0 B_1 B_2 + \dots$$
This process was introduced by Ronald R. Coifman around 1995, described in a PhD thesis of his student Michel Nahon \cite{nahon} and followed by several other researchers \cite{Saito_Letelier:2009,Healy:2009a,Healy:2009b}. It was independently discovered by T. Qian \cite{qtao} who
also studied, jointly with collaborators, different versions of the algorithm \cite{qtao2, qtao3, qtao4, qtao5}. 
There is a different line of investigation concerned with Blaschke products as a general family of orthogonal functions \cite{irr1, irr2, irr3, irr4,irr5} that we do not discuss here.
Convergence of the algorithm in the Hardy spaces $H^2$
is due to Qian (\cite{qtao}, the proof is also described in \cite{coif1}), the convergence in a large family of function spaces (including all Sobolev spaces) was given by Coifman and
the first author \cite{coif1}. Ways of computing the expansion for non-analytic signals are due to Coifman and the authors \cite{coif2}.  An extension to Hardy spaces $H^p$ is due to Coifman and Peyri\`{e}re \cite{jac}. The algorithm seems to have exceptional
convergence properties when applied to real signals, but a full theoretical justification is still open.
\begin{center}
\begin{figure}[h!]
\includegraphics[width = 0.9\textwidth]{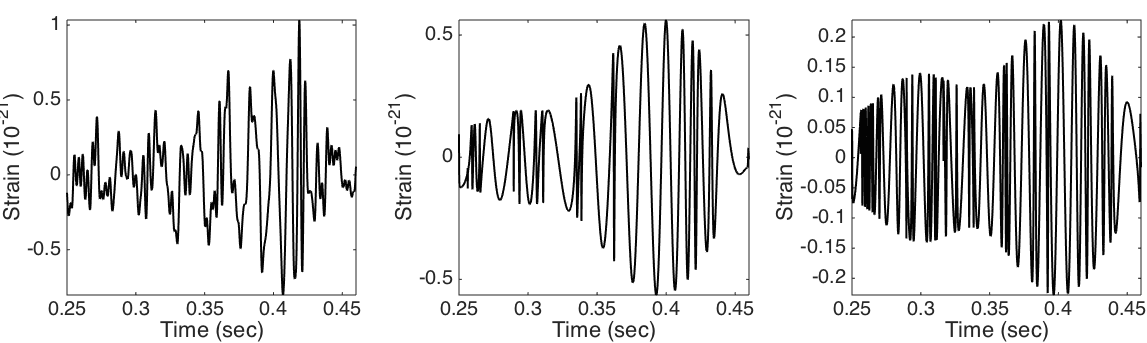}
\caption{The signal of a gravity wave (left), the first Blaschke product of the respective signal (middle) and the second Blaschke
product (right), from \cite{coif2}.}
\end{figure}
\end{center}

\textbf{Polynomials.}
If the function $f$ is a polynomial of degree $n$, then the expansion is exact after $n$ steps (this was already observed by Nahon \cite{nahon}). For polynomials, the explicit form of the Blaschke products
allows for the algorithm to be described in a simpler way: given a polynomial $f_n$
\begin{enumerate}
\item define the polynomial $g_{n+1}$ to be the polynomial having the same roots as $f_n$ outside the unit disk and, additionally, the roots $1/\overline{\alpha}$ for all roots $\alpha \neq 0$ of $f$ inside the unit disk, i.e.
$$ g_{n+1} = f_n(z)   \left(\prod_{|\alpha| \leq 1, f(\alpha) = 0}{ \frac{z- \alpha}{1 - \overline{\alpha} z}} \right)^{-1}$$
\item define $f_{n+1}(z) = g_{n+1}(z) - g_{n+1}(0)$ and, if $f_{n+1} \neq 0$, go to (1)
\end{enumerate} 

The main question is with which speed $f_n$ converges to 0 on the boundary of the unit disk. The paper \cite{coif1} shows that convergence speed in the Dirichlet space
can be explicitly connected to how many roots inside the unit disk one would expect $f_n$ to have. Using Theorem 1 of this paper, we can answer the question from \cite{coif2}
and conclude that for typical polynomials (and $n$ large), one cannot expect more than $o(n)$ roots inside the unit disk. 
\begin{corollary}[Invariance of certain random polynomials under Blaschke factorization.] Let $p_n$ be a random polynomial with $n$ roots that are independently and identically distributed following a probability measure that can be written as $\mu = \phi(\sqrt{x^2+y^2})dx dy$ for some $\phi \in C^{\infty}_c((1,\infty))$. For every such polynomial $p_n$, we may determine the Blaschke (or inner-outer) Factorization
$$ p_n(z) = p_n(0) + B \cdot G.$$
Then, $G$ is a random polynomial whose roots are also distributed according to $\mu$ as $n \rightarrow \infty$.
\end{corollary}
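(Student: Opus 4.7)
The plan is to deduce the corollary directly from Theorem 1 by observing that, for $p_n$ as in the statement, one step of the unwinding algorithm reduces to the map $p_n \mapsto p_n(z) - p_n(0)$, whose roots are exactly the solutions of $p_n(w) = p_n(0)$ controlled by Theorem 1.

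First, since $\phi$ is compactly supported in $(1,\infty)$, every root $z_k$ of $p_n$ satisfies $|z_k| > 1$ almost surely; in particular $p_n$ has no zeros in the closed unit disk $\overline{\mathbb{D}}$. Consequently the Blaschke factor of $p_n$ itself is trivial, and the decomposition $p_n(z) = p_n(0) + B \cdot G$ is realised by taking $B$ to be the Blaschke product of those zeros of $p_n(z) - p_n(0)$ that lie in $\overline{\mathbb{D}}$ and $G = (p_n - p_n(0))/B$.

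Now let $w_1,\dots,w_n$ be the $n$ solutions of $p_n(w) = p_n(0)$ (one of which is $w=0$). Since $\mu$ is radial, continuous and compactly supported, Theorem 1 gives
$$ \frac{1}{n}\sum_{k=1}^n \delta_{w_k} \longrightarrow \mu \qquad \text{in the sense of distributions as } n \to \infty.$$
Because $\mu$ is supported at positive distance from $\partial\mathbb{D}$ and therefore assigns zero mass to $\overline{\mathbb{D}}$, the portmanteau theorem implies $\#\{k:|w_k|\le 1\} = o(n)$ with probability tending to $1$. Let $\alpha_1,\dots,\alpha_{k'}$ denote the nonzero $w_k$ with $|w_k|\le 1$ (so $k' = o(n)$) and $\beta_1,\dots,\beta_{n-1-k'}$ those with $|w_k|>1$. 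A direct computation from the explicit form of the algorithm shows that $G$ is a polynomial of degree $n-1$ whose zero set is precisely
$$ \{\beta_1,\dots,\beta_{n-1-k'}\} \;\cup\; \{1/\overline{\alpha_1},\dots,1/\overline{\alpha_{k'}}\}. $$

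To conclude, I would compare the empirical measure of the roots of $G$ with $\frac{1}{n}\sum_k \delta_{w_k}$: the two differ by removing the point $0$, removing the $\alpha_i$'s, and adding their reflections $1/\overline{\alpha_i}$, for a total of at most $1+2k' = o(n)$ modified points; renormalising from $1/n$ to $1/(n-1)$ introduces only a further $O(1/n)$ error. Hence the empirical measure of the roots of $G$ still converges to $\mu$ in the sense of distributions. The main obstacle is really just bookkeeping; the essential input is Theorem 1 together with the fact that $\operatorname{supp}(\mu)$ is bounded away from $\overline{\mathbb{D}}$, which forces $B$ to absorb only a vanishing fraction of the zeros. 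A version valid for measures charging a neighbourhood of $\partial\mathbb{D}$ would require quantitative control on the number of roots of $p_n-p_n(0)$ inside $\overline{\mathbb{D}}$ and would be considerably more delicate.
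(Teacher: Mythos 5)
Your proof is correct and follows the same basic strategy as the paper: invoke Theorem 1 to conclude that the solutions of $p_n(w)=p_n(0)$ are asymptotically $\mu$-distributed, then observe that the Blaschke factor $B$ only absorbs the zeros inside $\overline{\mathbb{D}}$, which are negligible because $\mu$ charges no mass near the closed unit disk. Where you diverge slightly from the paper is in how you quantify that negligibility. The paper invokes the internals of the proof of Theorem 1 (the Rouch\'e argument showing that each solution is exponentially close to a root of $p_n$) to assert that, with high probability, every solution except the forced one at $z=0$ lies outside $\overline{\mathbb{D}}$; in that case $B$ is simply $z$ and $G=(p_n-p_n(0))/z$ has exactly the remaining $n-1$ roots. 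You instead treat Theorem 1 as a black box and apply the portmanteau theorem to the closed $\mu$-null set $\overline{\mathbb{D}}$ to get the weaker bound $o(n)$, and then carefully track the resulting bookkeeping, in particular that $G$ acquires the reflected points $1/\overline{\alpha_i}$ in place of the $\alpha_i$'s --- a point the paper glosses over by working in the regime where there are no such $\alpha_i$ at all. Your version is more modular and does not rely on statements proved only implicitly in the paper, at the modest cost of a slightly weaker (but fully sufficient) counting estimate. One small thing worth saying explicitly: since the support of $\mu$ is bounded, $|p_n(0)|\le R^n$ for some $R$, so all solutions of $p_n(w)=p_n(0)$ lie in a fixed compact set; this gives the tightness needed to upgrade distributional convergence in Theorem 1 to genuine weak convergence before applying portmanteau.
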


More precisely, let $p_n$ be a random
polynomial created in the way described at above for some radial probability measure $\mu$ that is compactly supported outside a neighborhood of the unit disk. Then Theorem 1 implies that the
roots of $p_n(z) - p_n(0)$ are again distributed according to the measure $\mu$ as $n \rightarrow \infty$. The proof of Theorem 1 also implies that with high probability all solutions of $p_n(z) = p_n(0)$
except the trivial root in the origin are outside the unit disk since they are exponentially close to the $n$ roots with high likelihood.
We observe that in this case, when $n$ is sufficiently large, the Blascke unwinding series reduces to a simple power series expansion. A similar phenomenon was already observed to occur for functions whose power series expansion has 
exponentially decaying coefficients in \cite[Proposition 3.2]{coif2}. {It seems likely that polynomials with roots outside the unit disk exhibit exponentially decaying coefficients at least in the generic case -- simple power series expansion then naturally leads to exponentially convergence in the unit disk.}

\section{Proofs}
We start by first proving a concentration of measure argument that plays a role in all three proofs. We then prove Theorem 2. Theorem 1 will follow from a small modification of the same argument. Theorem 3 follows from a different line of reasoning.\\

The whole argument is based on {establishing} the fact that if $p_n(z) = 0$ for some $z \in A$, then with high probability there is a solution of $p_n(z) = p_n(0)$ that is exponentially close (in the degree $n$) to $z$. We start from giving a heuristic argument, which will be made rigorous in the following proof. First, for any fixed $z \in \mathbb{C}$ the distance
to the nearest root is at scale $\sim n^{-1/2}$ in the sense that it is not going to be closer, but it might be further away. Note that if $z$ is not in the support of $\mu$, then this statement is trivial.
On the other hand, by a direct expansion
we expect
$$ \log{|p_n(z)|}\sim n  \int_{\mathbb{C}}{ \log{|z-x|} d\mu(x)}.$$
{Since the entire theorem is invariant under scaling all the roots by (the same) scalar $\lambda \in \mathbb{R}$, we may assume without loss of generality that this integral is positive.}
 Based on the above two facts, due to the root separation at scale $\sim n^{-1/2}$, since $\log{|p_n(z)|} =  \sum_{k=1}^{n}{ \log{|z-z_k|}}$, in order for a {\em single} root $z^*$ to substantially contribute to $\log{|p_n(z)|}$, we would require that $\log{|z-z^*|} \sim-n $ which requires that $z^*$ is exponentially close to $z$.

\subsection{Two Concentration Lemmata}
In the first lemma, we will actually prove a somewhat stronger statement; as it turns out, the likelihood of the quantity exceeding
the logarithmic integral is uniformly small.

\begin{lemma}\label{Lemma 5} Let $\mu$ and $p_n$ be as above. Let $c_1 > 0$, then, for some $c_2, c_3 > 0$,
$$ \mathbb{P}\left( \sup_{z \in \mathbb{C}} \left( \frac{\log{|p_n(z)|}}{n}  -  \int_{\mathbb{C}}{ \log{|z-x|} d\mu(x)} \right) \geq c_1 \right)  \leq c_2 e^{-c_3 n}$$
\end{lemma}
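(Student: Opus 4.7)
The plan is a concentration-plus-net argument, with a one-sided monotone envelope that converts the supremum over $\mathbb{C}$ into a controllable union bound.

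First I would reduce to a compact region. Since $\mu$ is supported in some disk $\{|x|\le R\}$, for $|z|\ge 2R$ I expand $\log|z-z_k|=\log|z|+\log|1-z_k/z|$ with $|\log|1-z_k/z||\le 2R/|z|$, and the same estimate holds under the integral. Thus
\[
\left|\frac{\log|p_n(z)|}{n}-\int_{\mathbb{C}}\log|z-x|\,d\mu(x)\right|\le \frac{4R}{|z|}
\]
deterministically, which is already $\le c_1$ once $|z|\ge R':=4R/c_1$. It therefore suffices to bound the supremum over the compact disk $D:=\{|z|\le R'\}$.

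Next, cover $D$ by a $\delta$-net $\mathcal{N}$ of cardinality $O(\delta^{-2})$. The function $\log|p_n|$ is not uniformly Lipschitz because of the logarithmic singularities at the random roots, but only the positive deviation needs to be controlled, so a one-sided envelope is enough: for any $z_0\in\mathcal{N}$ and $|z-z_0|\le\delta$, the triangle inequality gives
\[
\log|p_n(z)|=\sum_{k=1}^n\log|z-z_k|\le\sum_{k=1}^n\log(|z_0-z_k|+\delta).
\]
The variables $X_k(z_0):=\log(|z_0-z_k|+\delta)$ are i.i.d.\ and bounded in $[\log\delta,\log(2R')]$. Setting $V(z_0):=\mathbb{E}X_1(z_0)=\int\log(|z_0-x|+\delta)\,d\mu(x)$, Hoeffding's inequality yields
\[
\mathbb{P}\!\left(\frac{1}{n}\sum_{k=1}^n X_k(z_0)-V(z_0)\ge \tfrac{c_1}{2}\right)\le \exp\!\left(-\frac{c\,n\,c_1^2}{\log^2(1/\delta)}\right).
\]

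To close the argument, compare $V$ to $U(z):=\int\log|z-x|\,d\mu(x)$. From $\log(1+t)\le t$,
\[
0\le V(z_0)-U(z_0)=\int\log\!\left(1+\frac{\delta}{|z_0-x|}\right)d\mu(x)\le \delta\int\frac{d\mu(x)}{|z_0-x|},
\]
and the last integral is uniformly bounded in $z_0$ because $\mu$ has bounded density on compact support. Moreover, $U$ is continuous on $\mathbb{C}$ (classical for logarithmic potentials of measures with continuous density) and hence uniformly continuous on $D$. Choose $\delta$ as a small constant depending on $c_1$ and $\mu$ so that both $V(z_0)-U(z_0)\le c_1/4$ and $|U(z)-U(z_0)|\le c_1/4$ whenever $|z-z_0|\le\delta$. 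If the sup-event occurs at some $z\in D$ with nearest net point $z_0$, then $n^{-1}\sum_k X_k(z_0)\ge n^{-1}\log|p_n(z)|\ge U(z)+c_1\ge V(z_0)+c_1/2$, so a union bound gives the probability in the lemma at most $|\mathcal{N}|\cdot\exp(-c\,n\,c_1^2/\log^2(1/\delta))$, yielding the claim with $c_2,c_3$ depending only on $c_1$ and $\mu$.

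The main obstacle is the absence of a uniform Lipschitz estimate for $\log|p_n|$; any naive mesh argument is spoiled by the near-root behaviour. The clean fix is the monotone envelope $\log|z-z_k|\le\log(|z_0-z_k|+\delta)$, which only gives an upper comparison — fortunately exactly what is asked. A corresponding lower bound would be genuinely more delicate since it involves the nearest-root distribution, so I expect a separate (two-sided but pointwise) lemma to complement this one in the remainder of the paper.
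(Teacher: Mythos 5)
Your proof is correct, and it takes a genuinely different (and cleaner) route than the paper's. The paper discretizes the \emph{support of $\mu$} into $\varepsilon\times\varepsilon$ boxes, shows each box receives close to its expected number of roots with exponentially small failure probability (binomial tails plus a union bound over boxes), and then estimates $\log|p_n(z)|$ by assuming an adversarial placement of roots inside each box together with continuity of $\log|\cdot|$ away from $z$. This gives the one-sided bound but requires some hand-waving about error terms. You instead discretize the \emph{domain} by a $\delta$-net, pass to the deterministic smoothed envelope $\log|z-z_k|\le \log(|z_0-z_k|+\delta)$, which makes the variables bounded and i.i.d.\ so that Hoeffding applies directly at each net point, and then close with a union bound over the $O(\delta^{-2})$ net points. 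Your reduction to a compact region via the far-field expansion $\log|z-z_k|=\log|z|+\log|1-z_k/z|$ is also explicit, whereas the paper only implicitly relies on compactness of $\operatorname{supp}\mu$. The benefit of your approach is that the smoothing makes the one-sidedness of the needed estimate transparent (the logarithmic singularities only help the upper bound) and turns the concentration step into a textbook Hoeffding application with explicit dependence of $c_3$ on $c_1$ and $\delta$; the paper's box-counting approach is closer in spirit to the heuristic about root counts per region and sets up the machinery it reuses for Lemma 6.

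One small point to keep straight: when you write ``both $V(z_0)-U(z_0)\le c_1/4$ and $|U(z)-U(z_0)|\le c_1/4$'', you should first fix $\delta$ small enough to make the $V$--$U$ gap small (this is a bound in terms of $\delta$ uniformly over $z_0\in D$), and separately invoke uniform continuity of $U$ on $D$ to choose the net scale; both are satisfied by a single sufficiently small $\delta$, but the two requirements come from different sources. Also take $R'=\max(2R,\,8R/c_1)$ so that the far-field estimate both applies and gives strictly less than $c_1$. These are cosmetic; the argument is sound.
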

\begin{proof} 

Our assumptions on $\mu$ imply that for any fixed $z \in \mathbb{C}$, $\int_{\mathbb{C}}{ \log{|z-x|} d\mu(x)}$ is well defined, continuous in $z$, and finite everywhere (and, as can easily be seen, this would also hold for non-compactly supported
measures that decay with a certain rate). 
We split the support of $\mu$ into $\varepsilon \times \varepsilon$ squares. Since the support is compact, the total number
of squares is finite. Each of these squares $Q$ receives an expected number of $\mu(Q)n \pm \mathcal{O}(\sqrt{n})$ roots and the likelihood of a square receiving more than
$(1+ \delta)\mu(Q) n$ roots or less than $(1-\delta)\mu(Q)n$ roots is exponentially decaying in $n$ for every $\delta >0$ (with a constant in the exponential decay depending on $\mu(Q)$ and $\delta$). An application of the union bound then implies that all the squares are exponentially likely to have at most $(1+\delta)n$ roots. 
We can now bound $\log{|p_n(z)|}$ directly: we assume that there are no roots in the $\varepsilon \times \varepsilon$ square containing $z$ as well as no roots in the 8 adjacent squares. This introduces an arbitrarily small error (depending on $\varepsilon$ because the logarithmic integral is integrable); for the remaining squares, we assume that the roots are located in each box as far away from $z$ as possible. 
Outside of a neighborhood of $z$, we can use the continuity of the logarithm to deduce that the errors are small (depending on $\delta, \varepsilon$), in a neighborhood of $z$ we can use that the integral becomes as small as we wish because it is an integrable singularity. Altogether, we obtain (with uniform bounds depending only on the distribution of number of roots inside the finitely many squares)
$$ \mathbb{P}\left( \sup_{z \in \mathbb{C}} \frac{\log{|p_n(z)|}}{n}  -  \int_{\mathbb{C}}{ \log{|z-x|} d\mu(x)} \geq c_1 \right) \leq c_2 e^{-c_3 n},$$
where $c_2, c_3$ depend on $\varepsilon$ and $\mu$. 
\end{proof}

No such universal statement can be true for the corresponding lower bound because
$p_n$ will have roots where the logarithm tends to $-\infty$. However, this is the only obstruction to uniform approximation and we will now show that outside the 
roots, even already exponentially small distances away, the approximation is again uniform with high probability.

\begin{lemma}\label{Lemma 6} Let $z \in \mathbb{C}$ and $c_1, c_2>0$ be fixed. Then, for any $c_3 > 0$ and all $n$ sufficiently large (depending on all previous parameters)
$$ \mathbb{P}\left( \frac{\log{|p_n(z)|}}{n}  -  \int_{\mathbb{C}}{ \log{|z-x|} d\mu(x)} \leq -c_1 ~\Big|~ \emph{no roots in}~B\left(z,\frac{c_2}{\sqrt{n}}\right) \right) \leq \frac{c_3}{n}\,,$$
where $B(z,r)$ denotes the ball with center $z\in \mathbb{C}$ and radius $r>0$.
\end{lemma}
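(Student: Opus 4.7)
The plan is to view $\frac{1}{n}\log|p_n(z)| = \frac{1}{n}\sum_{k=1}^{n}\log|z - z_k|$ as a sample mean of bounded i.i.d.\ random variables and apply an exponential concentration inequality. The conditioning on the event of no roots near $z$ is mild enough to preserve the i.i.d.\ structure while simultaneously forcing boundedness of the summands, which is exactly what drives the estimate.

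First I would exploit that $z_1, \dots, z_n$ are unconditionally independent, so conditioning on the product event $E = \{z_k \notin B(z, c_2/\sqrt{n}) \text{ for all } k\}$ leaves them conditionally i.i.d., each drawn from the normalized restriction $\mu_n := \mathbf{1}_{\mathbb{C}\setminus B(z,c_2/\sqrt{n})}\, d\mu / \mu(\mathbb{C} \setminus B(z, c_2/\sqrt{n}))$. Next I would check that passing from $\mu$ to $\mu_n$ shifts the mean of $\log|z-\cdot|$ by a negligible amount: continuity (and local boundedness) of the density gives $\mu(B(z, c_2/\sqrt n)) = O(1/n)$ and, via $\int_0^r |\log s|\, s\, ds = O(r^2|\log r|)$, also $\int_{B(z, c_2/\sqrt n)} |\log|z-x||\, d\mu(x) = O(\log n / n)$. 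Combining yields
\[ \int \log|z-x| \, d\mu_n(x) = L(z) + O(\log n / n), \qquad L(z) := \int_{\mathbb{C}} \log|z-x|\, d\mu(x). \]

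Under the conditioning, each summand $X_k := \log|z - z_k|$ is bounded: from below by $\log(c_2/\sqrt n) = -\tfrac{1}{2}\log n + O(1)$, and from above by a $\mu$-dependent constant since $\mathrm{supp}\,\mu$ is compact. Hence $|X_k| \leq M_n = O(\log n)$ almost surely given $E$. The conditional variance $\mathrm{Var}_{\mu_n}(\log|z-\cdot|)$ is bounded by a $\mu$-dependent constant $\sigma^2$ uniformly in $n$, since $(\log|z-x|)^2$ is locally integrable and $\mu$ has compact support. Bernstein's inequality for bounded i.i.d.\ summands then delivers
\[ \mathbb{P}\!\left( \frac{1}{n}\sum_{k=1}^{n} X_k - \int \log|z-x|\, d\mu_n(x) \leq -\tfrac{c_1}{2} \,\Big|\, E \right) \leq 2\exp\!\left(-\frac{c\, n}{\log n}\right) \]
for some $c = c(c_1, \sigma^2) > 0$. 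Absorbing the $O(\log n / n)$ mean shift from the previous step into the remaining $c_1/2$ slack once $n$ is large enough, this yields the desired inequality $\mathbb{P}(\cdots \mid E) \leq c_3/n$ for any prescribed $c_3 > 0$ and all $n$ sufficiently large (in fact with far better decay than $c_3/n$).

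The only potential obstacle is the quantitative control of the mean-shift in the second step: one has to verify the $O(\log n/n)$ bound on the truncated integral and the $O(1/n)$ bound on the truncated mass, but both follow immediately from continuity and local boundedness of the density together with the elementary estimate $\int_0^r |\log s|\, s\, ds = O(r^2|\log r|)$. Everything else is a routine application of Bernstein-type concentration for bounded i.i.d.\ random variables.
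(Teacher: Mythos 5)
Your argument is correct, and it follows a genuinely different path than the paper. The paper partitions a $\delta$-neighborhood of $z$ into thin annuli $A_k = B(z, c_2(k+1)n^{-1/2})\setminus B(z, c_2 k n^{-1/2})$, bounds the expectation and variance of the contribution $X_k$ from each annulus (invoking a negative-correlation argument for the cross-terms in the variance of $\sum_k X_k$), and then applies Chebyshev's inequality; this delivers precisely the $O(1/n)$ claimed in the statement. You instead observe that the conditioning event is a product event, so the roots remain conditionally i.i.d.\ from the truncated, renormalized measure $\mu_n$; under $\mu_n$ the summands $\log|z-z_k|$ are bounded in magnitude by $O(\log n)$ with uniformly bounded variance, so Bernstein's inequality yields $\exp(-cn/\log n)$, which is far stronger than the $c_3/n$ required. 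Your route is cleaner (no annuli bookkeeping, no negative-correlation discussion) and gives a sharper bound, though the paper's more local decomposition is also what it re-uses when handling the far-field via Lemma~5, which you replace with the observation that the far-field contribution is simply bounded by the diameter of $\operatorname{supp}\mu$. The one point worth being explicit about in a final write-up is the finiteness of the second moment $\int_{\mathbb{C}}(\log|z-x|)^2\,d\mu(x)$, which holds because $\mu$ has a continuous compactly supported density and $(\log r)^2 r$ is integrable near $0$; once that is noted, your proof is complete as stated.
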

\begin{proof} From the proof of Lemma 5 we infer that the only roots that are relevant are those close to $z$. This is because for roots that are far away, their number exhibits exponential concentration in the associated boxes and the logarithm is continuous.
Fix $0 < \delta < 1$ arbitrarily small. We will now show that the terms coming from roots with distance to $z$ less than $\delta$ can be controlled. Take annuli 
$$A_k=B(z, c_2 (k+1) n^{-1/2}) \setminus B(z, c_2 k n^{-1/2}).$$ 
The contribution to the logarithm of the polynomial depends on the likelihood of roots landing in these annuli and is given by
$$ X_k := \sum_{\ell=1}^{n}{1_{x_{\ell} \in A_k} \log{|x_{\ell} - z|}}.$$
Instead of bounding $X_k$ from below, we will bound $-X_k$ from above (purely for simplicity of exposition). 
We observe that the annulus $A_k$ has a measure of
$$ |A_k| \leq \frac{\pi c_2^2 (2k+1)}{n}$$
and thus the expectation of points landing in $A_k$ can be bounded in terms of the biggest density of $\mu$ via the Radon-Nikodym derivative $d\mu/dx$
\begin{align*}
\mathbb{E} (-X_k) &\leq  \log{\left(\frac{\sqrt{n}}{c_2 k}\right)}  \mathbb{E} \sum_{\ell=1}^{n}{1_{x_{\ell} \in A_k}} \leq  \log{\left(\frac{\sqrt{n}}{c_2 k}\right)}  |A_k| n \left\| \frac{d \mu}{dx}\right\|_{L^{\infty}} \lesssim_{\mu}  \log{\left(\frac{\sqrt{n}}{c_2 k}\right)} c_2^2 k\,,
\end{align*}
where $\log{\left(\sqrt{n}/(c_2 k)\right)}$ in the first bound comes from $|z-x_k|>c_2 k n^{-1/2}$ since $x_k\in A_k$.
We also compute the variance.  Note that we can view $1_{x_\ell\in A_k}$ in $X_k$ as a binomial distribution with the parameters $n$ and $p=\mu(A_k) \leq |A_k|\|d\mu/dx\|_{L^{\infty}}$.  By assumption, $\{1_{x_\ell\in A_k}\}_{\ell=1}^n$ are independently and identically sampled. Thus, using that $\mathbb{V}(\lambda X) = \lambda^2 \mathbb{V}(X)$ as well as that for a binomial distribution $B(n,p)$ we have $\mathbb{V} B(n,p) = np(1-p) \leq np$, we see that
$$ \mathbb{V} (-X_k) \leq   \left(\log{\left(\frac{\sqrt{n}}{c_2 k}\right)}\right)^2 \mathbb{V} \left( \sum_{\ell=1}^{n}{1_{x_{\ell} \in A_k}} \right) \lesssim_{\mu} c_2^2 k \left(\log{\frac{\sqrt{n}}{c_2 k}}\right)^2 .$$
We will now control the sum over $X_k$,  where $k \leq \delta \sqrt{n}/c_2$ (this corresponds to a disk of radius $\delta$ around $z$) and obtain
$$ \sum_{k \leq \delta \sqrt{n}/c_2}{ \mathbb{E}( -X_k)} \lesssim  \sum_{k \leq \delta \sqrt{n}/c_2}{  c_2^2 k \log{ \left(  \frac{\sqrt{n}}{c_2 k} \right)} } \lesssim \int_{0}^{\delta \sqrt{n}}{ x \log{\frac{\sqrt{n}}{x}} dx} \lesssim \delta^2 \log{\left(\frac{1}{\delta}\right)} n\,.$$
We now want to obtain a similar bound on the variance and proceed as follows
\begin{align*}
\mathbb{V}\left(  \sum_{k \leq \delta \sqrt{n}/c_2}{ -X_k}\right) &= \mathbb{E} \left( \sum_{k \leq \delta \sqrt{n}/c_2}{ X_k} - \mathbb{E} \sum_{k \leq \delta \sqrt{n}/c_2}{ X_k}\right)^2 \\
&= \sum_{k, \ell \leq \delta \sqrt{n}/c_2} \mathbb{E}(X_k - \mathbb{E} X_k) \mathbb{E}(X_{\ell} - \mathbb{E}(X_{\ell}))
\end{align*}
The diagonal terms are computed above and correspond to $\mathbb{V}(-X_k)$. The off-diagonal terms $k \neq \ell$ are easy to deal with: if $X_k$ and $X_{\ell}$ were uncorrelated, then these terms would simply be 0. They are not perfectly uncorrelated but exhibit a (very) slight negative correlation:
pulling out the contribution coming from the logarithm, we reduce the problem to studying the following simpler problem: we are given $n$ points and distribute them in several boxes, what can be said about the cross-correlation? If one box receives unexpectedly many points, then there are fewer points left over to distribute over the other boxes and we expect them to get less than their expectation; if one box receives unexpectedly few points, then the argument reverses. Altogether, we see that the cross-correlation is negative and thus
$$\mathbb{V}\left(  \sum_{k \leq \delta \sqrt{n}/c_2}{ -X_k}\right)  \lesssim \sum_{k \leq \delta \sqrt{n}/c_2}{ c_2^2 k \left( \log{\frac{\sqrt{n}}{c_2k} } \right)^2}  \lesssim \delta^2 \left(\log{\frac{1}{\delta}}\right)^2 n.$$
The bound on the expectation shows that, asymptotically for $\varepsilon$ small, we can find $\delta$ sufficiently small so that the contribution to the term of interest is arbitrarily small (this mirrors the
fact that the logarithmic integral is integrable and so are all logarithmic integrals with integer powers on the logarithm). We now use Chebyshev's inequality to derive that the likelihood of the contributions coming from roots that have their distance from $z$ bounded by $\delta$ to exceed a constant $c_1$
$$ \mbox{ can be bounded from above by} \lesssim  \frac{\delta^2 \left(\log{\frac{1}{\delta}}\right)^2 n}{c_1^2 n^2}.$$
The roots further away than distance $\delta$ can be dealt with by appealing to continuity of the logarithm outside a neighborhood of $z$ together with the exponential localization of the number of roots in boxes akin to the proof of Lemma 5. Since $\delta$ can be chosen arbitrarily small, we obtain the result.
\end{proof}

\subsection{Proof of Theorem 2}
We start by performing a standard geometric estimate to argue that few roots have another root nearby, meaning at distance $c_2 n^{-1/2}$, whenever $c_2$ is small. Clearly, the largest concentration occurs if the measure has constant density $\|d \mu/dx\|_{L^{\infty}}$ in which case 
$$ \mathbb{P}\left(\mbox{no roots in}~B(z, c_2 n^{-1/2})\right) = \left(1 - \left\| \frac{d\mu}{dx}\right\|^2 \frac{c_2^2}{n}\right)^{n}.$$
For $n$ large, we can approximate this with the exponential function and conclude that the likelihood scales like $\exp(- c_2^2 \|d\mu/dx\|^2_{L^{\infty}} )$. In particular, for $c_2$ sufficiently small, an arbitrarily small proportion of roots has another root nearby (meaning at distance less than $c_2 n^{-1/2}$). For these roots, the likelihood of deviating from the logarithmic integral is very small and only
$\mathcal{O}(1)$ of isolated roots will do so. For the rest, we can use Rouch\'{e}'s theorem to conclude that each but $\mathcal{O}(1)$ of the isolated roots of $p_n(z)$ is exponentially
close to a root of $p_n(z) - p_n(0)$ which leads to a reproduction of measure.

\begin{proof}
The proof is based on understanding the expected size of $\mathbb{E} n^{-1}\log{|p_n|}$. 
For any fixed $z \in \mathbb{C}$,
$$ \mathbb{E} ~ \frac{\log{|p_n(z)|}}{n} =\mathbb{E} \frac{1}{n}  \sum_{k=1}^{n}{ \log{|z-z_k|}} = \int_{\mathbb{C}}{ \log{|z-x|} d\mu(x)}.$$
%Our assumptions on $\mu$ imply that this integral is well defined, continuous in $z$, and finite everywhere (and, as can easily be seen, this would also hold for non-compactly supported
%measures that decay with a certain rate). 
Let us now assume that $z \in A$.  
Since $\mu$ has an
absolutely continuous probability measure associated with the Lebesgue measure $dx$ with a continuous distribution function, if we subdivide the support of the measure into finitely many boxes of equal size, we know that for $n$ sufficiently large, each box contains a number of roots proportional to the measure assigned to that box by $\mu$. By Lemmas \ref{Lemma 5} and Lemma \ref{Lemma 6}, we have a large deviation principle: for any $c>0$ and $z \in \mathbb{C}$
\begin{equation}\label{LargeDeviation1}
\mathbb{P}\left(\left|\frac{\log{|p_n(z)|}}{n}  -  \int_{\mathbb{C}}{ \log{|z-x|} d\mu(x)}\right| \geq c \right) ~ \mbox{is decaying in $n$}
\end{equation}
when there is no root in a sufficiently small neighborhood of $z$.
In particular, the likelihood of $\log{|p_n(z)|}$ being actually bigger than the logarithmic integral are exponentially small, the likelihood of it being smaller is polynomially small assuming one is
distance $\sim n^{-1/2}$ away from the roots. \\

For any given root $y\in A$, we can remove it and write $q_n(z) (z-y) = p_n(z)$. 
We now pick $c_2$ in Lemma \ref{Lemma 6} arbitrarily small and $n$ sufficiently large.  By a union bound argument with $q_n(z)$, this guarantees that with high probability all but a small proportion of roots are actually $c_2 n^{-1/2}$ away from a root $y$. %
Lemma \ref{Lemma 5} and Lemma \ref{Lemma 6} and the definition of $A$ imply that for isolated roots $|q_n(z)| \gtrsim (1+\eta)^n |p_n(0)| $ for some $\eta>0$ depending
on the location of $y \in A$ and $n$ for all $z$ sufficiently close to $y$. 
Thus, $|p_n(z)|=|z-y||q_n(z)|\gtrsim |z-y|(1+\delta)^n |p_n(0)|$. In an exponentially small (depending on $\delta$) disk of radius $\delta'>0$ around the root $y$, $|z-y|(1+\delta)^n$ is sufficiently small so that $|p_n(z)|>|p_n(0)|$ for all $z$ on the boundary of $B(y,\delta')$. 
This bound holds for all but $\mathcal{O}(1)$ of isolated roots. Rouch\'{e}'s theorem applied in an exponentially small (depending on $\delta$) disk around the root $y$ then implies that $p_n(z) -p_n(0)$ has a root 
in that small disk and this implies the result.\\

The second part of the statement is much simpler: if $z \in \mathbb{C} \setminus A \cup B$, then this means that
$$ \int_{\mathbb{C}}{ \log{|x-z|} d\mu(x)}  < \int_{\mathbb{C}}{ \log{|x|} d\mu(x)} = \mathbb{E} |p_n(0)|^{1/n}.$$
Lemma 5 shows that the likelihood of the left-hand side exceeding its expectation is exponentially smart. Lemma 6 implies that the likelihood of the right-hand side being a lot smaller hinges on a root being nearby. However, as $n$ becomes large, that root would have to be exponentially close 0 to compensate for difference in expectation and that yields the desired statement. 
\end{proof}

\subsection{Proof of Theorem 1.}
\begin{proof} We use Theorem 2 and compute the sets $A$ and $B$.
We start by showing that for radial measures $\mu$, the function
$$ \int_{\mathbb{C}}{ \log{|z-x|} ~d\mu(x)} \qquad \mbox{has a global minimum in the origin.}$$
This can be seen rather easily from the elementary observation that
$$ \frac{1}{2\pi} \int_{0}^{2\pi}{ \log{|z - r e^{it}|} dt} = \begin{cases} \log{|z|} \qquad \mbox{if}~|z| > r \\
\log{|r|} \qquad \mbox{if}~|z| < r. \end{cases}$$
Using $\phi(r)$ to denote the Radon-Nikodym derivative of $\mu$ with respect to the Lebesgue measure, {\color{red}we} can write
\begin{align*}
 \int_{\mathbb{C}}{ \log{|z-x|} d\mu(x)}& = \int_{0}^{\infty}{ \phi(r) r \int_{0}^{2\pi}{ \log{|z - r e^{it}|} ~dt} dr} \\
&=  2\pi  \int_{0}^{|z|}{ \phi(r) r \log{|z|}~ dr} + 2\pi  \int_{|z|}^{\infty}{ \phi(r) r \log{r} ~dr} \\
&=  \int_{\mathbb{C}}{\log{|x|} d\mu(x)}+  2\pi  \int_{0}^{|z|}{ \phi(r) r \log{\frac{|z|}{r}} ~dr}.
\end{align*}
The second integral is always nonnegative, this shows that there is a global minimum in $z = 0$. It also allows
us to determine
$$ B = \begin{cases} B(0,R) \qquad &\mbox{if}~\phi \equiv 0~\mbox{on}~(0,R) \\
\left\{ 0 \right\} \qquad &\mbox{otherwise} \end{cases} \qquad \mbox{and} \qquad A = \mathbb{C} \setminus B.$$
This implies that $\mu(B) = 0$ and
Theorem 2 then implies that the density accurately reproduces $\mu$ on $A$. This implies the result.
\end{proof}

\subsection{Proof of Theorem 3.}

\begin{proof}
Let $p_n(z)$ be a random polynomial. 
The set $C$ will be a natural limit set for the (random) set $C_n$ associated to a random polynomial $p_n$ and defined by
$$ C_n = \left\{z \in \mathbb{C}: \sum_{k=1}^{n}{ \log{|z-z_k|}} = \sum_{k=1}^{n}{ \log{|z_k|}} \right\}.$$
$C_n$ is the level set of a superposition of random functions and does a priori look quite complicated. However, since we will only be studying it away from the support of $\mu$ in a neighborhood of $C$ and recall the deviation principle from the proof of Theorem 2, we see that these
objects are rather rigid. On parts of $C_n$ that are uniformly bounded away from the support of $\mu$, we see that
$$ \lim_{n \rightarrow \infty}{  n^{-1}\sum_{k=1}^{n}{\nabla \log{|z-z_k|}}} =   \nabla \int_{\mathbb{C}}{ \log{|x-z|} d\mu(z)}  \qquad \mbox{in probability.}$$
Moreover, by the same argument this extends to higher derivatives on $C$ since all higher derivatives are uniformly bounded (because $C$ is supported away from the support of $\mu$). This shows that for $n$ sufficiently
large, with high probability $C_n$ is a curve (a segment of which converges uniformly (together with its derivatives) to $C$ as $n \rightarrow \infty$). Let us assume that $\gamma_n$ is an arclength parametrization of a segment of $C_n$
on which the assumptions of Theorem 3 apply. $\gamma_n$ then parametrizes a curve on which $|p_n(z)| = |p_n(0)|$. It remains to see whether the arguments of the complex numbers can be matched to produce a solution of the equation. We note that
$$ \frac{\partial}{\partial t} \arg \prod_{k=1}^{n}{(\gamma_n(t)-z_k)} = n \frac{\partial}{\partial t} \frac{1}{n}\sum_{k=1}^{n}{ \arg (\gamma_n(t)-z_k)}.$$
For $n$ sufficiently large, this quantity converges to 
$$ \frac{\partial}{\partial t} \frac{1}{n}\sum_{k=1}^{n}{ \arg (\gamma_n(t)-z_k)} \rightarrow  \frac{\partial}{\partial t} \int_{\mathbb{C}}{\arg(\gamma(t) - z) d\mu(z)} \qquad \mbox{in probability,}$$
where $\gamma(t)$ is some curve satisfying $\gamma'(t) = \lim_{n \rightarrow \infty} \gamma_n'(t)$ (this, of course, leads exactly to an arclength parametrization of $C$).
This shows that the argument is asymptotically moving linearly in $n$. Therefore, when $n$ is sufficiently large, with high probability, the argument of $p_n(\gamma(t))$ hits the argument $p_n(0)$ at a rate given by a continuous function. As a result, we have a regular distribution of solutions of the equation along the level set: the argument needs to complete a total
revolution of $2\pi$ which accounts for the arising pre-factor. Since the linear rate is $\left|\frac{\partial}{\partial t} \int_{\mathbb{C}}{\arg(\gamma(t) - z) d\mu(z)}\right|$, the associated measure on $C$ is thus described in (\ref{Theorem:eq1}). 
\end{proof}

It is not difficult to see that the argument can be extended to the setting where $C$ and the measure of $\mu$ are not disjoint (but $\mu$ is still assumed to be absolutely continuous
with respect to the Lebesgue measure): the random curve $C_n$ is only minorly impacted by roots nearby (which would need to be exponentially close to have an impact which
becomes increasingly unlikely), we leave the details to the interested reader.

\subsection{An explicit example.}
This section is devoted to an explicit computation for what to expect in the example
$$ \mu = \frac{1}{4\pi} \left( \mathcal{H}^1\big|_{|z| = 1} \cup  \mathcal{H}^1\big|_{|z-2| = 1} \right)$$
(see Fig. 2). Summarizing the proof, we can fix a point $z \in \mathbb{C}$ and compute
$$ \mathbb{E} n^{-1} \log{|p_n(z)|} = \frac{1}{n} \sum_{k=1}^{n}{ \log{|z-z_k|}} \rightarrow \int_{\mathbb{C}}{ \log{|z-x|} d\mu(x)}$$
because the likelihood of having singularities nearby is small. Moreover, we have
$$ \frac{1}{2\pi} \int_{0}^{2\pi}{ \log{|z -  e^{it}|} dt} = \begin{cases} \log{|z|} \qquad \mbox{if}~|z| > 1 \\
0\qquad\qquad \mbox{if}~|z| < 1. \end{cases}$$
Thus,
$$  
\int_{\mathbb{C}}{ \log{|z-x|} d\mu(x)} = \begin{cases}
\frac{1}{2} \log{|z-2|} \qquad &\mbox{if}~|z| < 1 \\
\frac{1}{2} \log{|z|}  \qquad &\mbox{if}~|z-2| < 1 \\
\frac{1}{2} \log{|z|} + \frac{1}{2} \log{|z-2|}  \qquad &\mbox{otherwise.}
\end{cases}
$$
This also shows that we expect exponential growth in the origin
$$ \mathbb{E} n^{-1} \log{|p_n(0)|} = \frac{\log{2}}{2}.$$
It remains to find all points in the complex plane for which the logarithmic integral equals that quantity and those are displayed in Figure 2.

\end{document}